\newcommand{\abs}[1]{\lvert#1\rvert}
\newtheorem{thm}{Theorem}
\newtheorem{lem}[thm]{Lemma}
\theoremstyle{remark}
\newtheorem{clm}{Claim}
\newtheorem{poc}{Proof of Claim}
\newcommand{\thrd}{\cdots}
\newcommand{\mmm}{\operatorname{mmm}}
\let\@fnsymbol\@arabic
\newcommand\thankx[1]{\begingroup\let\rlap\relax\thanks{#1}\endgroup}
\begin{document}

\title{Subdivisions in the Robber Locating game}
\author{John Haslegrave\thankx{University of Sheffield, Sheffield, UK. {\tt j.haslegrave@cantab.net}}, Richard A. B. Johnson\thankx{University of Memphis, Memphis TN, USA. {\tt rabjohnson@gmail.com}}, Sebastian Koch\thankx{University of Cambridge, Cambridge, UK. {\tt sk629@cam.ac.uk}}}
\maketitle

\begin{abstract}
We consider a game in which a cop searches for a moving robber on a graph using distance probes, which is a slight variation on one introduced by Seager. Carragher, Choi, Delcourt, Erickson and West showed that for any $n$-vertex graph $G$ there is a winning strategy for the cop on the graph $G^{1/m}$ obtained by replacing each edge of $G$ by a path of length $m$, if $m\geqslant n$. They conjectured that this bound was best possible for complete graphs, but the present authors showed that in fact the cop wins on $K_n^{1/m}$ if and only if $m\geqslant n/2$, for all but a few small values of $n$. In this paper we extend this result to general graphs by proving that the cop has a winning strategy on $G^{1/m}$ provided $m\geqslant n/2$ for all but a few small values of $n$; this bound is best possible. We also consider replacing the edges of $G$ with paths of varying lengths.
\end{abstract}

\section{Introduction}

Pursuit and evasion games on graphs have been widely studied, beginning with the introduction by Parsons \cite{Par76} of a game where a fixed number of searchers try to find a lost spelunker in a dark cave. The searchers cannot tell where the target is, and aim to move around the vertices and edges of the graph in such a way that one of them must eventually encounter him. The spelunker may move around the graph in an arbitrary fashion, and in the worst case may be regarded as an antagonist who knows the searchers' positions and is trying to escape them.

The best-known variant is the classical Cops and Robbers game, introduced by Quillot \cite{Qui78}, and independently in a paper of Nowakowski and Winkler \cite{NW83} (where it is attributed to G. Gabor). Unlike the Lost Spelunker game, Cops and Robbers is played with perfect information, so that at any time each of the agents knows the location of all others. A fixed number of cops take up positions on vertices of a connected graph and a robber then starts on any unoccupied vertex. The cops and the robber take turns: at his turn the robber may move to any adjacent vertex or remain where he is, and at their turn all cops simultaneously make moves of this form. The cops win if at any point one of them reaches the robber's location. On a particular graph $G$ the question is whether a given number of cops have a strategy which is guaranteed to win, or whether there is a strategy for the robber which will allow him to evade capture indefinitely. The \textit{cop number} of a graph is the minimum number of cops that can guarantee to catch the robber.

Early results on this game include those obtained by Nowakowski and Winkler \cite{NW83}, who categorised the graphs of cop number 1, and Aigner and Fromme \cite{AF84}, who showed that every planar graph has cop number at most 3. An important open problem is Meyniel's conjecture, published by Frankl \cite{Fra87}, that the cop number of any $n$-vertex connected graph is at most $O(\sqrt{n})$ -- this has been shown to be true up to a $\log(n)$ factor for random graphs by Bollob{\'a}s, Kun and Leader \cite{BLK13}, following which \L uczak and Pra\l at improved the error term \cite{LP10}. More recently several variations on the game have been analysed by Clarke and Nowakowski (e.g. \cite{CN00}).

In this paper we consider the Robber Locating game, introduced in a slightly different form by Seager \cite{Sea12} and further studied by Carragher, Choi, Delcourt, Erickson and West \cite{CCDEW}. Like the Lost Spelunker game, the focus is on locating a hidden target, but, like Cops and Robbers, the target is a robber who moves around the vertices in discrete steps. There is a single cop, who is not on the graph but can probe vertices and receive information about how far away the robber is (in terms of the normal graph distance) from the vertex probed. For ease of reading we shall refer to the cop as female and the robber as male. The robber initially occupies any vertex. Each round consists of a move for the robber, in which he may move to an adjacent vertex or stay where he is, followed by a probe of any vertex by the cop. The cop then wins immediately if she is able to determine the robber's current location from the results of that probe and previous ones. This game may be viewed as a variant of the Sequential Locating game, also studied by Seager \cite{Sea13}, with the difference between the two being that in the Robber Locating game the target can move about the graph; in both games the choice of probe made may depend on the results of previous probes. If we instead require all the probes to be chosen at once (with a stationary target), we recover the Graph Locating game, independently introduced by Slater \cite{Sla75} and by Harary and Melter \cite{HM76}. 

In games with a stationary target, the searcher can guarantee to win eventually, simply by probing every vertex, and the natural question is the minimum number of probes required to guarantee victory on a given graph $G$. For the Graph Locating game, this is the \textit{metric dimension} of $G$, written $\mu(G)$. In the Robber Locating game, by contrast, it is not necessarily true that the cop can guarantee to win in any number of probes. Consequently the primary question in this setting is whether, for a given graph $G$, the cop can guarantee victory in bounded time on $G$, or equivalently whether she can catch a robber who has full knowledge of her strategy. We say that a graph is \textit{locatable} if she can do this and \textit{non-locatable} otherwise. 

In the game as introduced by Seager there was an additional rule that the robber cannot move to the vertex probed in the previous round (the \textit{no-backtrack condition}). Carragher et al.\ considered the game without this restriction, as do we, and Seager also considered the version without the no-backtrack rule for trees \cite{Sea14}. A similar game in which the searcher wins only if she probes the current location of the target and receives no information otherwise, but the target must move at each turn, was recently analysed by one of the authors \cite{Has13}. 

The main result of Carragher et al.\ \cite{CCDEW} is that for any graph $G$ a sufficiently large equal-length subdivision of $G$ is locatable. Formally, write $G^{1/m}$ for the graph obtained by replacing each edge of $G$ by a path of length $m$ through new vertices. Carragher et al.\ proved that $G^{1/m}$ is locatable whenever $m\geqslant \min\{\abs{V(G)},1+\max\{\mu(G)+2^{\mu(G)},\Delta(G)\}\}$. In most graphs this bound is simply $\abs{V(G)}$, and they conjectured that this was best possible for complete graphs, i.e.\ that $K_n^{1/m}$ is locatable if and only if $m\geqslant n$. The present authors \cite{HJK} showed that in fact $K_n^{1/m}$ is locatable if and only if $m\geqslant n/2$, for every $n\geqslant 11$. In this paper we show that the same improvement may be obtained in general: provided $\abs{V(G)}\geqslant 23$, $G^{1/m}$ is locatable whenever $m\geqslant\abs{V(G)}/2$. This bound is best possible, since $K_n^{1/m}$ is not locatable if $m=(n-1)/2$, and some lower bound on $\abs{V(G)}$ is required for it to hold, since $K_{10}^{1/5}$ is not locatable \cite{HJK}. These results, and those of Carragher et al., fundamentally depend on taking equal-length subdivisions; in the final section of this paper we show that an unequal subdivision is also locatable provided every edge is subdivided by at least a certain amount.

\section{Subdivisions and maximal matchings}

Recall that $G^{1/m}$ is the graph obtained by replacing each edge of $G$ with a path of length $m$ through new vertices. Each such path is called a \textit{thread}, and an \textit{branch vertex} in $G^{1/m}$ is a vertex that corresponds to a vertex of $G$. We write $u\thrd v$ for the thread between branch vertices $u$ and $v$.  We use ``a vertex on $u\cdots v$'' to mean any of the $m+1$ vertices of the thread, but ``a vertex inside $u\cdots v$'' excludes $u$ and $v$.

Our basic strategy to locate the robber on sufficiently large equal-length subdivisions of $G$ is to ensure the following.
\begin{enumerate}
\item Whenever the robber is at a branch vertex, the probe we make reveals that fact.
\item If the robber ever spends $r$ turns without visiting a branch vertex, we establish which thread he is inside and then can win on the next turn, where $r$ depends only on $G$.
\item If, when the robber visits a branch vertex, there is more than one possibility for his location, we can ensure that by the next time he is at a branch vertex we reduce the set of possibilities to a simpler set.
\end{enumerate}
1 and 2 above are sufficient to ensure that $G^{1/m}$ is locatable for sufficiently large $m$, since if $m>r$ the robber can only ever visit one branch vertex without being caught, and we can eventually find which one it is. For smaller $m$ the robber may be able to visit several branch vertices, so to get a better bound we need a way to progressively reduce the possibilities as in 3. This reduction is not necessarily to a smaller set but we ensure that only a bounded number of reductions can occur (and, by 2, each takes bounded time) before we reach a singleton set. Note that the reduction occurs by the next time the robber is at a branch vertex, even if this is because he stays at his current branch vertex until the next turn.

To get a bound of close to $\abs{V(G)}/2$, roughly speaking, our strategy is that in between the robber's visits to branch vertices we aim to eliminate possible destinations in pairs. To do this we must probe inside threads, aiming to eliminate both ends of the thread. This approach is simplified in the case of complete graphs, analysed extensively in \cite{HJK}, by the knowledge that any pair of branch vertices has a thread between them. In the general case we need some knowledge of the structure of $G$. To that end we will, so far as this is possible, divide the vertices of $G$ into adjacent pairs. So we take a maximal matching in $G$, and we will get a bound which depends on the size of that maximal matching.

Fix a connected graph $G$ with $n$ vertices, and let $M$ be a maximal matching. Write $k$ for $\abs{M}$ and $X$ for $V(G)\setminus V(M)$; since $M$ is maximal, $X$ is an independent set. Note that $M$ must be maximal, i.e.\ inextensible, but need not be a maximum-size matching.

\begin{thm}\label{match}If $m\geqslant k+1$ and $m\geqslant 12$, $G^{1/m}$ is locatable.
\end{thm}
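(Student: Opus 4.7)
The plan is to realise the three-point framework set out just before the theorem, using the matching $M$ as the combinatorial backbone. Throughout play the cop maintains a set $S \subseteq V(G)$ of \emph{candidate branch vertices} that are consistent with every probe so far; the target invariant is that between any two consecutive turns on which the robber occupies a branch vertex, either the game is already won or $|S|$ strictly decreases.

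First I would establish items (1) and (2) by fixing a small repertoire of ``standing'' probes, drawn from the branch vertices of $G^{1/m}$, that are issued on a fixed schedule regardless of the robber's movement. Connectivity of $G$ implies that a bounded number of such probes per round suffices to test whether the reported distance pattern is compatible with the robber occupying a branch vertex, and, if not, to identify the thread he is on and his position within it. The resulting $r$ in (2) is an absolute constant depending only on $G$, and the hypothesis $m \geqslant 12$ guarantees enough room on each thread for these standing probes and the ``disambiguating'' probes below to coexist.

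The heart of the argument is step (3). Suppose the robber has just arrived at a branch vertex with $|S| \geqslant 2$. Any subsequent branch vertex he visits is at thread-distance at least $m$, so the cop is granted $m-1 \geqslant k$ probe rounds before the next checkpoint. For each $uv \in M$ with $\{u,v\} \cap S \neq \emptyset$, she devotes one probe to an internal vertex of $u \thrd v$ close to its midpoint; the returned distance, combined with the standing probes, distinguishes among the possibilities ``robber was at $u$'', ``robber was at $v$'', and ``robber was elsewhere'', and determines which incident thread he has entered. Since there are at most $k$ such edges of $M$, these $k$ probes collectively cover $S \cap V(M)$. For $x \in S \cap X$, maximality of $M$ furnishes a neighbour $u \in V(M)$; the midpoint probe for the $M$-edge containing $u$ already separates the trajectory leaving $x$ from the trajectories leaving $u$ or its matched partner, so no extra probes are needed for $X$.

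The main obstacle I anticipate is scheduling: the standing probes and the at most $k$ matching-midpoint probes must be packed into $m-1$ rounds so that, for any two candidate pairs (previous branch vertex, current trajectory), the sequences of reported distances differ in at least one round. The matching edges act essentially independently, because each disambiguation for $uv$ is localised to the thread $u \thrd v$; the care required is near branch vertices whose incident threads belong to several candidates at once, and near the ends of threads where standing probes are read. Once this is in place, $|S|$ strictly decreases at every checkpoint, so the cop wins in bounded time, which proves the theorem.
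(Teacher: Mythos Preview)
Your outline matches the paper's three-point framework, but several of the load-bearing claims in step~(3) are not true as stated, and the places where they fail are exactly where the paper has to work hardest.

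First, you assert that the cop has $m-1\geqslant k$ rounds between checkpoints in which to issue her $k$ midpoint probes. But the robber may stay at his branch vertex, so the next checkpoint can be the very next turn; your invariant then demands that a \emph{single} probe strictly shrink $|S|$, which a midpoint probe need not do. The paper deals with this by making the first probe after a checkpoint a carefully chosen branch vertex (or near-branch vertex), designed so that if the robber has not moved the probe already wins or cuts $S$. Relatedly, your progress measure ``$|S|$ strictly decreases'' is not what the paper proves: it allows the candidate set inside $X$ to \emph{grow} when passing from case~(iii) to cases~(ii) or~(i), and uses a lexicographic descent instead.

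Second, your handling of $X$ does not work. If $x\in X$ has neighbour $u\in V(M)$ and the robber sits on the thread $x\thrd u$, then the midpoint probe of the matching edge $u\thrd u'$ reports ``adjacent thread'' regardless of whether he started at $x$ heading to $u$ or at $u$ heading to $x$; the raw distances differ only if you also know exactly how many steps he has taken and that he moved on every one of them, neither of which you control. The paper needs separate lemmas (the analogues of Lemmas~\ref{astar}, \ref{gamma}, \ref{delta}) precisely to funnel the robber into $X$ and then pin him down there.

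Third, and most seriously, you ignore the parity of $m$. When $m$ is odd a midpoint probe determines the robber's distance to the nearest branch vertex only up to an ambiguity of one, so it cannot by itself certify ``the robber is at a branch vertex now''. The paper's Lemma~\ref{noturnback} tracks a running bound on the robber's distance from his starting vertex $x$, reverts to branch-vertex probes whenever that bound allows him to be at $x$, and introduces an \emph{off-midpoint} probe at one critical stage to break a residual two-way ambiguity. None of this machinery is present in your sketch, and without it the claim that a midpoint probe ``distinguishes among `robber was at $u$', `robber was at $v$', `elsewhere' and determines which incident thread he has entered'' is false for odd $m$ at the tight value $m=k+1$.
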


The bound $m\geqslant k+1$ is best possible: $K_{2k+1}$ has a maximal matching of size $k$, but $K_{2k+1}^{1/k}$ is not locatable (\cite{HJK}, Theorem 3).

If $m$ is odd, let $t=(m-1)/2$. We use the word ``midpoint'' to refer to the vertex at distance $m/2$ from each end of a thread if $m$ is even, and either of the two vertices at distance $t$ from one end if $m$ is odd. If $m$ is odd, we also use the term ``off-midpoint'' to refer to either vertex at distance $t-1$ from one end and $t+2$ from the other. Note that probing a branch vertex will establish the robber's exact distance to his nearest branch vertex (by considering the result mod $m$). Probing a midpoint will also establish the robber's exact distance to the nearest branch vertex when $m$ is even, but when $m$ is odd it will only give this distance within two consecutive possibilities. This uncertainty makes the odd case significantly more complicated. In particular, note that point 1 above can be assured by probing branch vertices and midpoints when $m$ is even, but only by probing branch vertices when $m$ is odd. For this reason our strategy reverts to probing branch vertices whenever it is possible, given the results of previous probes, that the robber is at a branch vertex. (This is not necessary when $m$ is even, but we present a strategy which will work in either case rather than considering the two cases separately.) For the remainder of this section we assume that $m\geqslant k+1$ and $m\geqslant 12$. 

Before giving the proof of Theorem~\ref{match} we prove some preparatory lemmas. 

\begin{lem}\label{astar}Suppose that, immediately following some probe, we know that the robber is on a thread between $u$ and a vertex in $Z$, where $u$ is any specified vertex and $Z\subseteq X$. Then there is a strategy that will, within $\abs{Z}$ steps and by the next time the robber reaches a branch vertex, either win or identify that he is in $Z$.
\end{lem}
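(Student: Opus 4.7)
The plan is to probe the vertices $z_1, z_2, \ldots, z_{|Z|}$ one at a time in successive steps, arguing that each probe either wins the game or rules out one thread from consideration as the robber's current location. After $|Z|$ such probes all threads would have been eliminated, which is a contradiction, so the strategy must terminate earlier with a win or with the robber identified as being in $Z$.

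The key structural fact I will exploit is that $Z\subseteq X$ is independent in $G$, so any two distinct $z_i,z_j\in Z$ satisfy $d_G(z_i,z_j)\geqslant 2$ and hence $d_{G^{1/m}}(z_i,z_j)\geqslant 2m$. This forces any shortest path in $G^{1/m}$ from $z_t$ to a vertex $v$ on a thread $u\thrd z_i$ with $i\neq t$ to pass through $u$, so that $d(z_t,v)=m+d(u,v)$; in contrast $d(z_t,v)=m-d(u,v)$ whenever $v$ lies on $u\thrd z_t$. These two ranges overlap only at $d=m$, and that value corresponds uniquely to $v=u$ (again using the independence of $Z$ to exclude other branch vertices at distance $m$ from $z_t$).

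Under the inductive hypothesis that, entering step $t$, the robber is known to be on one of the threads $u\thrd z_i$ with $i\in\{t,t+1,\ldots,|Z|\}$ (with some known information about his distance from $u$ that is carried forward from earlier steps), probing $z_t$ with result $d$ splits cleanly into the following cases: if $d=0$ or $d=2m$ then the robber is at some $z_i$, identifying him in $Z$; if $0<d<m$ then he is uniquely at the vertex of $u\thrd z_t$ at distance $m-d$ from $u$, so we win; if $d=m$ then he is at $u$, so we win; and if $m<d<2m$ then he is at distance $d-m$ from $u$ on some thread $u\thrd z_i$ with $i>t$, so thread $t$ is ruled out and the position information is carried into step $t+1$.

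The main obstacle will be to deal with the possibility that the robber, passing through a branch vertex between probes, escapes onto a thread $u\thrd y$ or $z_i\thrd y$ with $y\notin\{z_1,\ldots,z_{|Z|}\}$, taking him outside the originally specified thread system. The plan for handling this is to check that the distance from $z_t$ to any such escape vertex falls into one of the ranges enumerated above (for example, a vertex at distance $1$ from $u$ on an escape thread $u\thrd y$ also yields $d(z_t,\cdot)=m+1$, matching the ``other-thread, position~$1$'' case), so the classification and the thread-elimination step remain valid. A careful case analysis of the possible escape moves, together with the cumulative distance-from-$u$ information accumulated over earlier probes, should be enough to show that the invariant is preserved and the $|Z|$-step bound achieved before the robber's next branch-vertex visit.
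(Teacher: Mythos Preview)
Your approach---probe the vertices of $Z$ one by one and use the independence of $X$ to read off the robber's position from the residue of the returned distance modulo $m$---is exactly the paper's approach, and your case split on $d$ is correct.

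However, the ``main obstacle'' you describe is not an obstacle at all, and planning a case analysis for it is unnecessary. Between consecutive probes the robber makes exactly one move, so he cannot pass \emph{through} a branch vertex between probes: if he enters a branch vertex, he is sitting on it when the next probe is made. At that moment your probe at $z_t$ returns a multiple of $m$ (namely $0$, $m$, or $\geqslant 2m$), and you immediately either win (robber at $z_t$ or at $u$) or certify that he lies in $Z$. Either way the lemma's guarantee is discharged right there; you never need to follow him onto a thread $u\thrd y$ or $z_i\thrd y$ with $y\notin Z$. Drop that paragraph and your proof is essentially the paper's, just with the distance cases written out more explicitly.

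One small omission: you should note (as the paper does) that any $z_i\in Z$ not adjacent to $u$ can be discarded at the outset, since there is no thread $u\thrd z_i$; this keeps the $d\leqslant 2m$ bound clean.
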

\begin{proof}We may neglect any vertices in $Z$ which do not have threads leading to $a$. Simply probe the remaining vertices of $Z$ in order. If the robber reaches a branch vertex, we will identify whether it is the one just probed (distance 0), $a$ (distance $m$), or some other vertex in $Z$ (distance a higher multiple of $m$, since no threads go between vertices in $X$). If he does not reach a branch vertex within $\abs{Z}$ turns then we will have probed one end of the thread he is on (identified by getting a result between 0 and $m$) and so located him (since the other end is known to be $a$).
\end{proof}

\begin{lem}\label{noturnback}Suppose that, immediately following some probe, we know that the robber is adjacent to a branch vertex. Write $x$ for the unknown branch vertex he is adjacent to. Let $v$ be any vertex and $w_1w_1',\ldots,w_{k-2}w_{k-2}'$ be a set of $k-2$ edges of $G$. Then it is possible to make at most $2k-3$ probes in such a way that
\begin{enumerate}[(i)]
\item if the robber is at a branch vertex for one of these probes, then the cop identifies this fact and that branch vertex must be $x$;
\item otherwise, for each of the sets $\{v\},\{w_1,w'_1\},\ldots,\{w_{k-2},w_{k-2}'\}$ in turn, the cop identifies how many endpoints of the robber's current thread are in that set.
\end{enumerate}
\end{lem}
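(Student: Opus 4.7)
The plan is to probe the $2k-3$ vertices $v, w_1, w_1', w_2, w_2', \ldots, w_{k-2}, w_{k-2}'$ in that order, using one probe for the singleton $\{v\}$ and two probes per pair $\{w_i, w_i'\}$. I take $v$ to be a branch vertex, since otherwise $v$ cannot be an endpoint of any thread and the count in $\{v\}$ is trivially $0$. Every probe is then at a branch vertex of $G^{1/m}$, so the returned distance $d$ satisfies $d \equiv 0 \pmod m$ precisely when the robber is currently at a branch vertex, and in that case $d/m = d_G(y, u)$ where $u$ is the branch vertex the robber occupies.

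For (ii), suppose no probe detects a branch vertex. Then throughout the $2k-3$ rounds the robber lies strictly inside some thread, and since distinct threads meet only at branch vertices, his thread $a\cdots b$ is constant. For a probed branch vertex $y$ with the robber at interior position $p \in (0,m)$ on his thread, the distance from $y$ to the robber equals $\min(d_G(y,a)m+p,\, d_G(y,b)m+(m-p))$; this is $p$ or $m-p$ (hence lies in $(0,m)$) when $y \in \{a,b\}$, and is at least $m + \min(p, m-p) > m$ otherwise. So comparing the observed distance to $m$ reveals whether $y$ is an endpoint of the robber's thread. The single probe at $v$ then determines the count in $\{v\}$, and the two probes at $w_i,w_i'$ determine whether each of them is an endpoint, with the sum giving the count in $\{w_i, w_i'\}$.

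For (i), when the probe at round $t$ returns a multiple of $m$, the cop learns that the robber is at a branch vertex $u$ with $d_G(y, u) = d/m$. Since the robber started at $G^{1/m}$-distance $1$ from $x$, after $t$ moves $u$ lies in the closed ball of radius $t+1$ about $x$ in $G^{1/m}$; for $t \leq m-2$ this ball contains $x$ as its only branch vertex (any other branch vertex is at $G^{1/m}$-distance at least $m-1$ from a point adjacent to $x$), so $u = x$ directly. The hard part will be the regime $t \geq m-1$: the robber could in principle have marched along his initial thread and reached the far endpoint $x'$, so a purely geometric argument does not suffice. The remedy I expect is to extract extra information from the residues $\min(p, m-p) \pmod m$ seen at the earlier probes, which trace out the robber's motion along his thread; an escape to $x'$ forces a specific monotone pattern of these residues, and combining that pattern with the value of $d$ at round $t$ should rule out every alternative to $u = x$. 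Making this bookkeeping precise is the main technical obstacle.
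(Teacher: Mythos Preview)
Your non-adaptive plan of probing the $2k-3$ branch vertices $v,w_1,w_1',\ldots,w_{k-2},w_{k-2}'$ in order handles condition~(ii) cleanly, but it cannot establish condition~(i) as stated. Condition~(i) does not merely ask the cop to \emph{identify} which branch vertex the robber has reached; it asserts that this vertex \emph{is}~$x$. With your fixed schedule the robber is granted up to $2k-3$ moves before the final probe, and since we may have $m=k+1$ (so that $m-1\leqslant 2k-3$ once $k\geqslant 3$), the robber can simply walk from his starting position adjacent to $x$ to the far endpoint $x'$ of his thread and be sitting at $x'$ when probe $m-1$ is made. He is then genuinely at a branch vertex other than $x$, and (i) fails outright. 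No residue bookkeeping can ``rule out every alternative to $u=x$'' in this run, because $u$ really is $x'$; at best such bookkeeping would let the cop \emph{recognise} that the robber is at $x'$, which is a different and strictly weaker conclusion than the one the lemma claims and than what the subsequent lemmas rely on.

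The paper avoids this by making the strategy adaptive. It spends two probes (at the endpoints $w_i,w_i'$) on a pair only when the previous result is consistent with the robber being adjacent to a branch vertex; otherwise it spends a single probe at a midpoint (or, in one delicate case with $m$ odd and $m=k+1$, an off-midpoint) of $w_i\thrd w_i'$, and that one probe already determines how many endpoints of the robber's thread lie in $\{w_i,w_i'\}$. The crucial observation is that a two-probe stage is triggered only when the robber was within two steps of a branch vertex, hence (inductively) within two steps of $x$; so the extra move it costs is offset by his distance from $x$ having just been reset to at most~$2$. An induction on $i$ then shows that at every probe in stage $i$ the robber is at distance at most $i+2\leqslant k<m$ from $x$, so he can never reach any branch vertex other than $x$. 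This adaptive throttling of the probe count is the idea your approach is missing.
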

\begin{proof}
We start by probing $v$. This establishes whether the robber is at $x$, distance 1 from $x$ or distance $2$ from $x$. If the robber is at $x$ then we are done by (i). Otherwise he is inside a thread and the first probe also establishes whether that thread meets $v$. We now consider $w_1w_1',\ldots,w_{k-2}w_{k-2}'$ in order. For each thread $w_i\thrd w_i'$, we either probe a vertex near the middle or probe both endpoints in turn. We do the former unless it is possible (given previous probe results) that the robber is at a branch vertex by the time of this probe. When probing a vertex near the middle, we use a midpoint unless $m$ is odd, $m=k+1$, $i=t$, and the result of the previous probe was consistent with the robber being at distance $t+1$ from $x$ and with him being at distance $t-1$ from $x$ at that time; in this specific case we use an off-midpoint. We refer to the probe or probes on $w_i\thrd w'_i$ as ``stage $i$''.

First we analyse the case where we probe an off-midpoint. If we do this, $m$ must be odd and the previous probe must have been at a midpoint, since if it were at a branch vertex there would be no uncertainty about the robber's distance to the nearest branch vertex. In order for the result of the previous probe to meet the conditions for us to probe an off-midpoint, it must be $\pm 1$ (mod $m$), since if it were a multiple of $m$ then the robber being $t-1$ steps from a branch vertex would be impossible and if it were anything else then the robber being $t+1$ steps from a branch vertex would be impossible. Consequently, at the time of the previous probe the robber was at a midpoint or one step away from a midpoint. We now probe the off-midpoint of $w_t\thrd w'_t$, and at this point the robber can be at most two steps from a midpoint. So if he is on $w_t\thrd w'_t$ the distance returned is at most 4, if he is on an adjacent thread it is in $\{2t-3,\ldots,2t+5\}$, i.e.\ in $\{m-4,\ldots,m+4\}$, and if he is anywhere else it is at least $2m-4$. Since $m$ is odd we must have $m\geqslant 13$, and we will successfully distinguish these possibilities.

Likewise, if we probe a midpoint of a thread and get a result $r$ then, since we know the robber cannot be at a branch vertex, $r\leqslant t$ if he is inside the probed thread, $t+1\leqslant r \leqslant t+m$ if he is inside an adjacent thread, and $r>t+m$ otherwise. Therefore, by this method we establish in stage $i$ whether the robber has reached a branch vertex and, if not, whether he is on a thread ending in $w_i$ or $w_i'$ (and whether he is on $w_i\thrd w'_i$). It remains to prove that if the robber is at a branch vertex for one of these probes then that vertex is $x$.

We claim that at each probe in stage $i$ the robber's distance from $x$ is at most $i+2$, and that equality is possible for stage $i$ only if $i\leqslant 2$ or we had equality for the final probe in stage $i-1$; this is sufficient since $i+2\leqslant k<m$. If $i=1$ then the previous probe was at $v$ and established the robber's exact distance from $x$; if this was 2 then we need only one additional probe for $w_1\thrd w'_1$ and if it was 1 we need two additional probes. So the claim is true for $i=1$. For $i\in\{2,\ldots k-3\}$ we proceed by induction. The robber's distance from $x$ was at most $i+1$ at the time of the previous probe. If the previous probe was consistent with him being at distance 1 from a branch vertex, then he must have been distance at most 2 from a branch vertex; since $i+1\leqslant k-2<m-2$, his distance from $x$ was at most 2, and so for each of the two probes required for this stage it is at most $4\leqslant i+2$; if $i>2$ the inequality is strict. If the previous probe was not consistent with him being next to a branch vertex, we only require one probe for this stage so the robber's distance from $x$ when we make this probe is at most $i+2$, and it is less unless there was equality for the previous probe. 

Finally, for $i=k-2$, we again know that the robber's distance from $x$ was at most $k-1$ at the time of the previous probe. Again, if the result of the previous probe was consistent with him being next to a branch vertex he must in fact have been within two steps from a branch vertex (and, if exactly two steps away, $m$ must be odd). Suppose he was two steps from a branch vertex other than $x$, i.e.\ at distance $k-1$ from $x$. In this case, the previous probes must be consistent with him being at distance $i+2$ at the end of stage $i$ for every $i$. If the result of stage $t-1$ was inconsistent with him being $t-1$ away from $x$ then (since it was consistent with him being $t+1$ away) the robber was at least $t$ away from $x$ at that point, and we therefore know he is not adjacent to a branch vertex at the end of stage $k-3$. Conversely, if the result of stage $t-1$ was consistent with the robber being $t-1$ from $x$ at the end of that stage, then we probed an off-midpoint at stage $t$. When we probed the off-midpoint, the robber must have been at distance $t+2$ from $x$ and so the result of that probe (mod $m$) would be in $\{0,\pm 3\}$. If he had been at distance $t-2$ from $x$, the result would have been in $\{\pm 1,\pm 4\}$, so we can eliminate this possibility. Consequently, if the robber is at distance $k-1$ at the end of stage $k-3$ then we will know that he is not adjacent to a branch vertex, and so we only use one probe for stage $k-2$ and his distance from $x$ is at most $k$ when we make it, as required. Otherwise, even if we use two probes for stage $k-2$, his distance at each of them is at most $k$. This proves the claim. 
\end{proof}

\begin{lem}\label{gamma}Suppose that, immediately following some probe, we know that the robber is at the vertex at distance 1 from $Z$ on a thread between a vertex in $V(M)\setminus\{a\}$ and a vertex in $Z$, where $Z\subseteq X$. Then there is a strategy that will, within $2k-2+\abs{Z}$ steps and by the next time the robber reaches a branch vertex, either win or identify that he is in $Z$.
\end{lem}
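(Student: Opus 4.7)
The plan is as follows. Since the robber is at distance 1 from an unknown $z \in Z$, he is adjacent to the branch vertex $z$, so Lemma~\ref{noturnback} applies with $x = z$. I would first use Lemma~\ref{noturnback} to pin down the other endpoint $u$ of the thread (up to a small ambiguity), then apply Lemma~\ref{astar} to finish.

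For Lemma~\ref{noturnback}, I would take the $k-2$ edges to be $k-2$ of the $k$ matching edges of $M$, and $v$ to be a carefully chosen vertex. When $a \in V(M)$, a natural choice is to let $v$ be the matching partner of $a$ and to omit the two matching edges $\{v,a\}$ and one other edge $e_0$; then $v$ together with the $k-2$ kept matching edges covers exactly $2k-3$ matching vertices, leaving $a$ and the two endpoints of $e_0$ uncovered. This invocation costs at most $2k-3$ probes. If case~(i) occurs, the branch vertex the robber reaches must be $x = z \in Z$, so we have identified the robber as being in $Z$ after at most $2k-3$ probes.

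Otherwise case~(ii) holds: the robber remains on the thread from $u$ to $z$, and for each of the sets $\{v\},\{w_1,w_1'\},\ldots,\{w_{k-2},w_{k-2}'\}$ we know how many of its two thread endpoints lie in that set. Since $z \in X$ is disjoint from $V(M)$, $z$ lies in none of these sets, so each count equals $\abs{\{u\}\cap S}$; thus we learn exactly whether $u = v$ or $u \in \{w_i,w_i'\}$ for each $i$. Combined with the constraint $u \neq a$, this narrows $u$ to at most two candidates (either the two endpoints of the unique covered matching edge containing $u$, or the two uncovered endpoints of $e_0$ if $u$ lies in no covered set). One further probe of one such candidate then determines $u$ exactly, bringing the tally to $2k-2$. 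I would finish by applying Lemma~\ref{astar} with this known $u$ and the set $Z$, using $\abs{Z}$ further probes to either win or identify that the robber is in $Z$; the grand total is $(2k-3)+1+\abs{Z} = 2k-2+\abs{Z}$, as required.

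The hardest part will be verifying that step~(ii) always reduces the ambiguity in $u$ to at most two candidates so that the single deciding probe suffices. The case $a \in V(M)$ is covered by the choice above, but when $a \notin V(M)$ one has only $2k-3$ covered vertices out of $2k$ potential candidates, and one must pick $v$ and the omitted edges more cleverly---for instance by letting $v$ lie in $X$ to test whether $z = v$ and hence relax one constraint on $u$, or by exploiting a structural property of $G$ to break the residual symmetry. I also need to check that the single deciding probe does not misfire if the robber has simultaneously reached a branch vertex, but this should follow routinely from distance arithmetic modulo~$m$.
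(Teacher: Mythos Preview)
Your overall plan matches the paper's: take $v=a'$, omit the matching edges $aa'$ and one further edge $bb'$, invoke Lemma~\ref{noturnback} to narrow the unknown endpoint $u$ down to a pair $\{c,c'\}$ (or to $a'$ alone), and then use one more probe plus Lemma~\ref{astar}. Two points deserve comment.

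First, the case $a\notin V(M)$ never arises: throughout this section $aa'\in M$ is a fixed matching edge, so $a\in V(M)$ always. Your choice $v=a'$ together with the $k-2$ retained matching edges then covers exactly $V(M)\setminus\{a,b,b'\}$, and since $u\neq a$ by hypothesis, the residual ambiguity is always at most a pair, just as you want.

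Second, the step you dismiss as ``routine'' is precisely where the paper does its real work, and simply probing one of the two candidate branch vertices can fail. After the probes of Lemma~\ref{noturnback} you know $u\in\{c,c'\}$; if the last of those probes is consistent with the robber being within two steps of a branch vertex, then on the next turn he may actually be at one. If you now probe $c$ and receive the answer $m$, the robber could be at $c'$ (so you have won) or at some $z\in Z$ with $cz\in E(G)$ (so he is in $Z$, but $u$ remains undetermined and Lemma~\ref{astar} cannot yet be applied). Both possibilities give residue $0\pmod m$, so modular distance arithmetic does not separate them, and you can claim neither outcome within the stated probe budget. The paper handles this case by instead probing the vertex at distance $4$ from $c$ along $c\thrd c'$ and carrying out a short case analysis (using $m\geqslant 12$) that distinguishes the five possibilities: robber in $Z$, at $c$, at $c'$, inside a thread to $c$, or inside a thread to $c'$. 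Only when the previous probe has already certified that the robber is not adjacent to a branch vertex does the paper simply probe $c$. So your outline is right, but ``should follow routinely from distance arithmetic modulo $m$'' understates the one genuinely delicate point of the argument.
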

\begin{proof}
If $k=1$ then $V(M)\setminus \{a\}=\{a'\}$, and so the result is true by Lemma~\ref{astar}. Otherwise, choose any $bb'\in M$ not equal to $aa'$, and take $v=a'$ and $w_1w_1',\ldots,$ $w_{k-2}w_{k-2}'$ to be the edges in $M$ other than $aa'$ and $bb'$. Now we probe as in Lemma~\ref{noturnback}, stopping if we identify that the robber is at a branch vertex or if any probe indicates that he is on an adjacent thread. By Lemma~\ref{noturnback}, this takes at most $2k-3$ probes, and if he reaches a branch vertex we have identified that he is in $Z$. If none of the probes indicate that he is in $Z$ or on an adjacent thread to the one probed then he must be on a thread adjacent to $b\thrd b'$ (since one end of his thread is outside $X$).

Consequently, within $2k-3$ probes, we have identified, at the time of the last probe, either that the robber was in $Z$ or that he was inside a thread leading from $Z$ to either $c$ or $c'$ for some $cc'\in M$ (perhaps only one of these is actually possible). In the latter case the result of the last probe either told us that the robber was distance at most 2 from the nearest branch vertex or that he was distance at least 2 from the nearest branch vertex. If we know he was not adjacent to a branch vertex then we may probe $c$ to establish whether he is inside a thread leading to $c$ or to $c'$, and then we are done in at most $\abs{Z}$ additional steps by Lemma~\ref{astar}. If not then probe the vertex at distance 4 from $c$ along $c\thrd c'$. If the robber is now in $Z$, this will return $m+4$ or $2m-4$. If he is at $c$ or $c'$ then it will return 4 or $m-4$ respectively. If he is inside a thread leading to $c$ then it will return a result in $\{5,6,7,m+1,m+2,m+3\}$ and if he is inside a thread leading to $c'$ then it will return a result in $\{m-3,m-2,m-1,m+5,m+6,m+7,2m-7,2m-6,2m-5\}$. Since $m\geqslant 12$ these five possibilities are all distinguished, so either we are done immediately or within $\abs{Z}$ additional steps by Lemma~\ref{astar}.
\end{proof}

\begin{lem}\label{delta}Suppose that, immediately following some probe, we know that the robber is at the vertex at distance 1 from $a$ on a thread not leading to $a'$, where $aa'\in M$. Then there is a strategy that will, within $2k-2+\abs{X}$ steps and by the next time the robber reaches a branch vertex, either win or identify that he is in $X$.
\end{lem}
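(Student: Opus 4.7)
The plan is to adapt the strategy of Lemma~\ref{gamma} by inserting a midpoint probe of $b\cdots b'$ to tell apart the case in which the unknown thread-endpoint lies in $\{b,b'\}$ from the case in which it lies in $X$. If $k=1$, then $V(M)=\{a,a'\}$, and since the thread avoids $a'$ its other endpoint must lie in $X$; zero probes suffice. Assume henceforth that $k\geqslant 2$ and fix any $bb'\in M\setminus\{aa'\}$.

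First, I would apply Lemma~\ref{noturnback} with $v=a'$ and $w_1w_1',\ldots,w_{k-2}w_{k-2}'$ being the $k-2$ edges of $M$ other than $aa'$ and $bb'$ (so $x=a$ in Lemma~\ref{noturnback}(i)). I would stop early if any probe indicates either that the robber has reached a branch vertex (necessarily $a$, so we win) or that his current thread shares an endpoint with some $w_i\cdots w_i'$. In the latter case the thread is $a\cdots w_i$ or $a\cdots w_i'$, and---exactly as in Lemma~\ref{gamma}---one further probe at distance $4$ from $w_i$ along $w_i\cdots w_i'$ distinguishes the five possible robber positions and lets us win.

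If all $\leqslant 2k-3$ probes complete without either event, the other endpoint of the thread lies in $\{b,b'\}\cup X$. I would then probe the midpoint (or a near-midpoint, if $m$ is odd) of $b\cdots b'$. By the distance bound in the proof of Lemma~\ref{noturnback} the robber's current distance $j_a$ from $a$ is at most $k+1$; combined with the independence of $X$, a direct calculation shows that the result lies in $[3m/2-k-1,\,3m/2-1]$ if the endpoint is in $\{b,b'\}$ (the shortest path routes via $b$ or $b'$) and in a strictly higher interval if it lies in $X$ (the shortest path then routes via $a$). Hence a single probe distinguishes the two subcases. In the $X$ subcase I would apply Lemma~\ref{astar} with $u=a$ and $Z=\{z\in X:az\in E(G)\}$, which in at most $|X|$ further probes either wins or identifies the robber in $Z\subseteq X$, giving a total of at most $2k-2+|X|$ probes. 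In the $\{b,b'\}$ subcase the midpoint probe fixes $j_a$ exactly, and a single further well-chosen probe---for instance at distance $4$ from $a$ along $a\cdots b'$, with a symmetric choice if $ab'\notin E(G)$---distinguishes $a\cdots b$ from $a\cdots b'$ and locates the robber precisely, letting us win.

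The main obstacle I foresee is the range analysis for the midpoint probe, where disjointness of the two intervals must be verified across the several sub-cases according to which of $ab$, $ab'$, and the edges from $\{b,b'\}$ to $X$-vertices belong to $E(G)$. A secondary subtlety is the boundary case $|X|=0$, where the budget tightens to $2k-2$ and the midpoint probe must be skipped (the endpoint is necessarily in $\{b,b'\}$); here a single carefully chosen probe following Lemma~\ref{noturnback} must simultaneously identify the thread and locate the robber within the tight budget.
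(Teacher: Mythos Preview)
Your overall plan (run Lemma~\ref{noturnback}, then clean up) matches the paper, but your choice $v=a'$ is the key misstep. We already know the robber's thread does not meet $a'$, so the information from that first probe about the thread's endpoints is wasted: after the $2k-3$ probes you are left with $\{b,b'\}\cup X$ as candidates for the far end. The paper instead takes $v=b$, which is new information, so that after Lemma~\ref{noturnback} only $\{b'\}\cup X$ remains. This single change removes the need for your extra midpoint probe entirely and makes the endgame short: either invoke Lemma~\ref{astar} with $u=a$, or deal with the single thread $a\thrd b'$ by one well-placed probe (the paper uses the vertex at distance~$3$ from $a$ on $a\thrd b'$ when the robber may be near a branch vertex, and $b'$ otherwise).

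Because of the wasted probe, several parts of your plan do not close. First, the $k=1$ case is not ``zero probes'': the robber may return to $a$, in which case you must \emph{win}, so you need Lemma~\ref{astar} there too. Second, your midpoint-of-$b\thrd b'$ analysis breaks when $m$ is odd and the robber may be at $a$: for the midpoint at distance $t$ from $b$, the results ``robber at $a$ with $ab\in E(G)$'' and ``robber one step from $a$ on $a\thrd b'$'' both give $3t+1$, so you cannot separate ``win now'' from ``endpoint is $b'$''. Third, in the interrupt case your appeal to the distance-$4$ probe of Lemma~\ref{gamma} does not transfer: there the robber was near the $Z$-end, here he is near $a$, and the geometry is different; the paper instead probes $w_i$ directly (using that the last result already tells you which end the robber is \emph{not} adjacent to) and may need one further probe, not one probe total. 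Finally, for $\abs{X}=0$ your single final probe must both pick the thread and pin down $j_a$, but probing $b$ returns $\min(m+j_a,\,2m-j_a)$ on $a\thrd b'$ when $ab\in E(G)$, which is ambiguous for $j_a$ near $m/2$; your budget gives no room to resolve this. Switching to $v=b$ dissolves all of these issues at once.
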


\begin{proof}
The strategy here proceeds in much the same way as for Lemma~\ref{gamma}. If $k=1$ then the robber is on a thread between $a$ and $X$ and we are done by Lemma~\ref{astar}. Otherwise, choose any $bb'\in M$ not equal to $aa'$, take $v=b$ and $w_1w_1',\ldots,w_{k-2}w_{k-2}'$ to be the edges in $M$ other than $aa'$ and $bb'$, and proceed as in Lemma~\ref{noturnback}. 

If a probe reveals that the robber is on an adjacent thread, interrupt the process. If we know which thread he is on, we can win by probing either end; if not we know he is on $a\thrd w_i$ or $a\thrd w'_i$, and from the result of the last probe we also know either that he not adjacent to $a$ or that he is not adjacent to the other end. Consequently, probing $w_i$ will either win or identify that he is inside $a\thrd w'_i$, and in the latter case we can now win by probing either end.

If this does not happen, and we do not establish that the robber has returned to $a$, then at the time of the last probe the robber was inside a thread leading to $b'$ or to some vertex in $X$. If there is no thread $a\thrd b'$ then we are done by Lemma~\ref{astar}, so we assume that there is. We know from the result of the last probe either that his distance to the nearest branch vertex was at least 2 or that it was at most 2 (and at most 1 if $m$ is even). In the former case we probe $b'$; this wins if he is on $a\thrd b'$ and we are done by Lemma~\ref{astar} if not. In the latter case we probe the vertex at distance 3 from $a$ along the thread $a\thrd b'$. If the robber is at $a$ the result will be 3, if he is at $b'$ it will be $m-3$, and if he is in $X$ it will be $m+3$. If he is inside $a\thrd b'$, the result will be in $\{0,1,2,m-6,m-5,m-4\}$ ($\{1,2,m-5,m-4\}$ if $m$ is even) and if he is inside a thread between $a$ and $X$ it will be in $\{4,5,6,m,m+1,m+2\}$ ($\{4,5,m+1,m+2\}$ if $m$ is even). Since $m\geqslant 12$ these possibilities are distinguished, and we have either won, established that he is in $X$, or are done by Lemma~\ref{astar}. 
\end{proof}

We are now ready to combine these elements into a complete strategy to locate the robber.

\begin{proof}[Proof of Theorem~\ref{match}]
We describe a winning strategy for the cop. First, probe branch vertices in turn until the answer reveals that the robber is at a branch vertex. Either this eventually happens or he remains inside a single thread, in which case we eventually identify both ends and win.

Suppose that we know, on receiving the distance from some probe, that the robber's current location is in a set $A\cup Y$ of branch vertices, where $A\subseteq V(M)$ and $Y\subseteq X$. We show that, by the time the robber next reaches a branch vertex and within time $n+2$,
\begin{enumerate}[(i)]
\item\label{inx} if $A=\varnothing$, we either locate him or reach a point where he is known to be in a smaller set $Y'\subset Y$;
\item\label{pair} if $A=\{a\}$ or $A=\{a,a'\}$, where $aa'\in M$, we either locate him or reduce to \eqref{inx};
\item\label{rest} otherwise we either locate him, reduce to \eqref{inx} or \eqref{pair}, or reach a point where he is known to be in a smaller set $A'\cup Y$ with $A'\subset A$.
\end{enumerate}
Note that when we reduce to an earlier case the number of candidate vertices in $X$ may increase; going from knowing he is in $\{a,a'\}$ to knowing he is in $X$ is a valid reduction.

In case \eqref{inx}, pick a vertex $y\in Y$ and probe a neighbour of $y$, i.e.\ a vertex on $y\thrd a$ for some $a\in V(M)$. If the response is 0 or 1 the robber is caught. If it is 2 we know the robber is one step away from $y$ heading for some vertex in $V(M)\setminus\{a\}$. Setting $Z=\{y\}$ in Lemma~\ref{gamma}, we can win in time $2k-1$ (and by the time he next reaches a branch vertex). If the response is larger, we will take $Y'=Y\setminus\{y\}$. If the response is $2m-2$ we know the robber is one step away from some vertex in $Y'$ on a thread leading to $a$, and so we can win or reach a position where the robber is known to be in $Y'$ in the required time by Lemma~\ref{astar}. If the response is $\pm 1$ mod $m$ (but not 1), the robber is known to be in $Y'$ and we are done. The only other
possibility is that the answer is 0 or $\pm 2$ mod $m$, but larger than $2m-2$, in which case the robber is known to be one step from a vertex in $Y'$, on a thread which does not lead to $a$. Now, by Lemma~\ref{gamma}, we can win or reach a point where he is known to be in $Y'$ within the required time.

In case \eqref{pair}, probe the vertex at distance 2 from $a$ on $a\thrd a'$. If the response is 1, 2, $m-3$ or $m-2$, we have won. If it is 3 then we know the robber is one step from $a$ on a thread not leading to $a'$, and if it is $m-1$ then we know he is one step from $a'$ on a thread not leading to $a$; in either of these cases we are done by Lemma~\ref{delta}. If the response is $m+1$ then the robber is on a thread between $Y$ and $a$, and we are done by Lemma~\ref{astar}. If it is greater than $m+1$ and $\pm 2$ mod $m$, the robber is in $Y$ and so we are done.  Finally, if the response is greater than $m+1$ and not $\pm 2$ mod $m$ then the robber must be one step from a vertex in $Y$ on a thread not leading to $a$, so we are done by Lemma~\ref{gamma}.

In case \eqref{rest}, first choose some $aa'\in M$ such that $a\in A$ and probe $a$. If the result is 0 we have won and if it is any higher multiple of $m$ we know the robber is in $Y\cup A\setminus\{a\}$, so are done. Otherwise we either know that the robber is adjacent to $a$ or that he is adjacent to a vertex in $Y\cup A\setminus\{a\}$. Now choose any $bb'\in M$ not equal to $aa'$, take $v=a'$ and $w_1\thrd w_1',\ldots,w_{k-2}\thrd w_{k-2}'$ to be the threads corresponding to edges in $M$ other than $aa'$ and $bb'$, and proceed as in Lemma~\ref{noturnback}. If the robber returns to a branch vertex, either we know it is $a$ and have won, or we know it is in $Y\cup A\setminus\{a\}$ and are done. If he does not, we establish whether the robber is on a thread containing $a$ or $a'$, and for each $w_i\thrd w'_i$ in turn we establish whether he is on that thread, on an adjacent thread, or elsewhere. 

If one of these probes locates the robber's thread exactly then we interrupt the process and probe either end of that thread to win. If we establish that one end of the robber's thread is $x$ and the other is $w_i$ or $w'_i$, for some $x$ and $i$, then, since the previous probe was either at $x$ or a midpoint or off-midpoint of $w_i\thrd w'_i$, the result of that probe can be consistent with the robber being adjacent to $x$ or with him being adjacent to the other end of his thread, but not both. So we interrupt the process and probe $w_i$. If the result is $m$ we know whether the robber is at $x$ or $w'_i$, so have won; if it is less than $m$ we have located him on $x\thrd w_i$ and if it is more than $m$ he is on $x\thrd w'_i$ and we can locate him by probing $x$.

If we establish that one end of the robber's thread is $w_i$ or $w'_i$ and the other is $w_j$ or $w'_j$, for some $i<j$, then again we interrupt the process. Since the last probe was at a midpoint or off-midpoint of $w_j\thrd w'_j$, the result of that probe can be consistent with the robber being within distance 2 of $\{w_i, w'_i\}$ or with him being within distance 2 of $\{w_j,w'_j\}$ but not both. Assume without loss of generality that we know he is not that close to $\{w_j,w'_j\}$. Now probe $w_i$. If the result is 0 or $m$ the robber is at $w_i$ or $w'_i$ respectively and we win; otherwise we have established which of these vertices is an end-point of his thread, and can win by probing $w_j$ and then (if necessary) $w'_j$.

If none of these occur, we continue to the end of the $k-2$ stages of Lemma~\ref{noturnback}. The final probe will either establish that the robber is distance at most 2 from a branch vertex or establish that he is distance at least 2 from every branch vertex. Further, we will have established one of the following.
\begin{enumerate}[(a)]
\item Both ends of the robber's thread are in $\{b,b'\}\cup X$.
\item One end of the robber's thread is in $\{c,c'\}$ and the other is in $\{b,b'\}\cup X$, for some $cc'\in M$ (this includes the case where we further know which of $c$ and $c'$ it is; we will not use this information).
\end{enumerate}
In case (a), by probing vertices in $\{b,b'\}\cup X$ in turn, we will either win or establish that he has reached a branch vertex in $\{b,b'\}\cup X$, reducing to \eqref{pair} as desired.

We therefore assume case (b). If the robber was known not to be adjacent to a branch vertex at the last probe, then probe a midpoint of $b\thrd b'$. This will reveal if the robber is on a thread ending in $b$ or $b'$. If so, it will also establish either that he is distance at least 2 from $\{b,b'\}$ or that he is distance at least 2 from $\{c,c'\}$, without loss of generality the former. We can then win by probing $c$ followed by $b$ and $b'$. If the probe reveals that the robber is not in a thread meeting $b$ or $b'$, both ends of his thread are in $\{c,c'\}\cup X$, and by probing vertices in this set in turn we will win or reduce to \eqref{pair}. 

Alternatively, if the robber was known to be within 2 of a branch vertex at the end of the $k-2$ stages of Lemma~\ref{noturnback}, probe the vertex at distance 3 from $b$ along $b\thrd b'$. If the result is 3 or $m-3$ the robber is at $b$ or $b'$ respectively and is caught. If the result is larger and $\pm 3$ mod $m$, he is in $\{c,c'\}\cup X$. If the result is 4, 5 or 6 he is on $c\thrd b$ or $c'\thrd b$ near $b$, if it is $m+1$ or $m+2$ he is on one of these threads near the other end, and if it is $m-2$ or $m-1$ he is on $c\thrd b'$ or $c'\thrd b'$ near $b'$; in any of these cases probing $w_i$ will locate him. If the result is $m$ then the robber may be on $c\thrd b'$ or $c'\thrd b'$ near $b'$ or on $c\thrd b$ or $c'\thrd b$ near the other end, but he is not adjacent to a branch vertex so probing $b$ will determine which of these two possibilities is the case and then probing $c$ will locate him. If none of these apply then he is not on a thread meeting $b$ or close to $b'$, so probing $b'$ will determine whether he is on $c\thrd b'$ or $c'\thrd b'$. If he is we can locate him as before; if not both ends of his thread are in $\{c,c'\}\cup X$ and we can probe vertices in this set until we locate him or reduce to \eqref{pair}.

This completes the analysis of cases, and so from any point where we know that the robber is in a set of branch vertices, we can reduce that set using \eqref{inx}, \eqref{pair} or \eqref{rest}. It is simple to check that in each case at most $2k+2+\abs{X}=n+2$ probes are required. At most $n-1$ reductions can occur before the set is a singleton and the robber is located, and so this strategy guarantees to catch the robber in bounded time.
\end{proof}

\section{Imperfect maximal matchings}

Theorem~\ref{match} gives a bound in terms of the size of a maximal matching, and where there are maximal matchings of different sizes we are free to choose that which gives the best bound, i.e.\ the smallest one. This bound is therefore weakest when all maximal matchings are perfect matchings. We next show that, since $G$ is connected, there are only two possibilities for such a graph. Write $\mmm(G)$ for the minimum size of a maximum matching of $G$.

\begin{lem}\label{mmm}If $G$ is a connected graph with $2r$ vertices such that $\mmm(G)=r$, then either $G\cong K_{2r}$ or $G\cong K_{r,r}$.
\end{lem}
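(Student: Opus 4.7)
My plan is to first prove a twin lemma: any two non-adjacent vertices of $G$ that share a common neighbour must have identical neighbourhoods. To see this, suppose $u,v$ are non-adjacent with common neighbour $y$, and let $w\in N(u)\setminus\{v\}$ with $w\notin N(v)$ (note $w\neq y$, since $y\in N(v)$). By hypothesis the matching $\{uw,vy\}$ extends to a perfect matching $M$, and rerouting via $M':=(M\setminus\{uw,vy\})\cup\{uy\}$ produces a matching that leaves uncovered only the pair $\{v,w\}$. Since $vw\notin E(G)$, no edge can be added to $M'$, so $M'$ is maximal but not perfect, contradicting $\mmm(G)=r$. Hence $N(u)=N(v)$.

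Next I would deduce the global structure of $G$. A shortest $u_0$--$u_k$ path with $k\geqslant 3$ would have $u_0u_2\notin E(G)$ with common neighbour $u_1$, so by the twin lemma $N(u_0)=N(u_2)$, forcing $u_0u_3\in E(G)$ and contradicting shortest-pathness. Thus $\operatorname{diam}(G)\leqslant 2$, so any two non-adjacent vertices share a neighbour, and the twin lemma then makes the relation ``$u=v$ or $uv\notin E(G)$'' transitive: if $uv,vw\notin E(G)$ then $N(u)=N(v)=N(w)$, and $uw\in E(G)$ would give the absurdity $w\in N(w)$. Its equivalence classes are independent sets and every cross-class pair is adjacent, so $G\cong K_{n_1,\ldots,n_k}$ for some $n_1\geqslant\cdots\geqslant n_k$ with $\sum n_i=2r$.

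Finally I would identify which such graphs satisfy $\mmm(G)=r$. Existence of a perfect matching in $G$ forces $n_i\leqslant r$ for all $i$, and connectedness rules out $k=1$. The case $k=2$ then gives $n_1=n_2=r$, i.e.\ $G\cong K_{r,r}$. For $k\geqslant 3$, if every $n_i=1$ then $G\cong K_{2r}$; otherwise some part has size at least $2$, and I would pick two vertices $u,v$ from such a part, taking them from the largest part whenever $n_1=r$. Then $G-\{u,v\}$ is complete multipartite on $2r-2$ vertices with every part of size at most $r-1$, hence admits a perfect matching. This perfect matching is a matching of $G$ leaving only the non-adjacent pair $\{u,v\}$ uncovered, so it is maximal and not perfect, contradicting $\mmm(G)=r$.

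The one place I expect to need care is the last step: when $n_1=r$ (the Hall threshold in $G$) it is essential that the removed pair come from the largest part, since otherwise the largest part is unchanged in $G-\{u,v\}$ and the Hall condition for a perfect matching there fails. Once that choice is pinned down, checking that the resulting complete multipartite graph on $2r-2$ vertices satisfies the Hall bound is a routine calculation, and the twin and diameter arguments are the structural content of the proof.
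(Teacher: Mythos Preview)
Your proof is correct and takes a genuinely different route from the paper's. The paper proceeds by induction on $r$: it first shows that $G-\{v,w\}$ is connected for every edge $vw$, applies the induction hypothesis to conclude $G-\{v,w\}\cong K_{2(r-1)}$ or $K_{r-1,r-1}$, then argues that the type is the same for every edge and finishes with a short case analysis. Your argument is instead a direct structural one: the twin lemma (non-adjacent vertices with a common neighbour have identical neighbourhoods, via the elegant rerouting $M\mapsto (M\setminus\{uw,vy\})\cup\{uy\}$) immediately forces diameter at most $2$ and makes non-adjacency an equivalence relation, so $G$ is complete multipartite; you then dispose of all complete multipartite graphs other than $K_{2r}$ and $K_{r,r}$ by exhibiting a maximal non-perfect matching. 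Your approach is cleaner and more conceptual, identifying the global structure in one stroke rather than building it up inductively; the paper's approach trades this for more elementary individual steps. The only point worth flagging is that your final step implicitly uses the standard fact that a complete multipartite graph on $2s$ vertices with no part exceeding $s$ has a perfect matching---you acknowledge this as ``routine'', and it is, but it would be worth stating explicitly in a write-up.
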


\begin{proof}If $r=1$ then this is trivial. If $r=2$ and $G$ contains a vertex of degree 3, the each of those three edges can be extended to a matching of size 2, so every other pair of vertices is adjacent and $G\cong K_4$. If $r=2$ and $G$ has no vertex of degree 3 then, since $G$ is connected, $G\cong P_4$ or $G\cong K_{2,2}$, but $\mmm(P_4)=1$ by taking the middle edge. So the result is true for $r=2$. 

Let $G$ be such a graph for some $r>2$, and suppose the result is true for $r-1$. First note that if $G$ had a cutvertex, $v$, then $G-v$ would have at least one odd component, $C_1$ say, and another component $C_2$; letting $w$ be a neighbour of $v$ in $C_2$, $G-\{v,w\}$ would have an odd component, so the largest matching containing $vw$ would have size less than $r$, contradicting $\mmm(G)=r$. So $G$ is 2-connected. 

\begin{clm}If $vw$ is any edge of $G$ then $G-\{v,w\}$ is connected.
\end{clm}
\begin{poc}Suppose not. If any component of $G-\{v,w\}$ is odd, then $vw$ cannot be extended to a matching of size $r$, contradicting $\mmm(G)=r$. If all components are even then let $C_1$ and $C_2$ be any two components. Since $G$ is 2-connected, $v$ and $w$ each have neighbours in every component. Let $x$ be a neighbour of $v$ in $C_1$ and $y$ be a neighbour of $w$ in $C_2$. Now $C_1-x$ has an odd number of vertices, so at least one odd component, which is also an odd component of $G-\{v,w,x,y\}$. Consequently $\{vx,wy\}$ cannot be extended to a matching of size $r$, contradicting $\mmm(G)=r$. So the claim is proved.
\end{poc}

Since $G-\{v,w\}$ is connected for every $vw\in E(G)$, by the induction hypothesis it is isomorphic to $K_{2(r-1)}$ or $K_{r-1,r-1}$, since otherwise $G$ has a maximal matching of size $1+\mmm(G-\{v,w\})<r$. Next we show that the graph so obtained is isomorphic to the same one of these for every edge.

\begin{clm}Either $G-\{v,w\}\cong K_{2(r-1)}$ for every $vw\in E(G)$, or $G-\{v,w\}\cong K_{r-1,r-1}$ for every $vw\in E(G)$.
\end{clm}
\begin{poc}Suppose $uv,vw\in E(G)$. Since $r>2$, there are vertices $x,y,z\not\in\{u,v,w\}$. If these vertices form a triangle in $G$ then $G-\{u,v\}\not\cong K_{r-1,r-1}$, so $G-\{u,v\}\cong K_{2(r-1)}$, and similarly $G-\{v,w\}\cong K_{2(r-1)}$. If they do not then neither graph is complete, so each must be isomorphic to $K_{r-1,r-1}$. So any pair of edges which share a vertex produce the same graph, and since $G$ is connected the same is true for any pair of edges.
\end{poc}
If $G$ is not complete then it has non-adjacent vertices $x$ and $y$. If every edge meets $x$ or $y$ then $\mmm(G)\leqslant 2<r$. So some edge does not, and then removing the endpoints of that edge gives a non-complete graph. So either $G$ is complete or $G-\{v,w\}\cong K_{r-1,r-1}$ for every $vw\in E(G)$.

Suppose $G-\{v,w\}\cong K_{r-1,r-1}$ for every $vw\in E(G)$. Take disjoint edges $uv,wx,yz$ (this is possible since $G-\{u,v\}$ has $r-1$ disjoint edges). It is not possible for two vertices to appear in the same part of one of the graphs $G-\{u,v\},G-\{w,x\},G-\{y,z\}$ and different parts of another, so we can 2-colour the vertices of $G$ consistently with each of the colourings of these three graphs. Now any two vertices both appear in at least one of the three graphs, so are adjacent if and only if they are opposite colours. Since this means that $u$ and $v$ are opposite colours, and $G-\{u,v\}\cong K_{r-1,r-1}$, there are equal numbers overall and $G\cong K_{r,r}$.
\end{proof}

This allows us to complete the proof of our main theorem.

\begin{thm}Let $G$ be a connected graph of order $n$, where $n\geqslant 23$. Then $G^{1/m}$ is locatable for any $m\geqslant n/2$.
\end{thm}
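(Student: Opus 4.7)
The plan is to split into two cases based on $\mmm(G)$. If $\mmm(G) < n/2$, then since $\mmm(G)$ is an integer and $m \geq n/2$, we have $m \geq \mmm(G)+1$; together with $m \geq n/2 \geq 23/2$, forcing $m \geq 12$, Theorem~\ref{match} applied to a maximal matching realising $\mmm(G)$ gives that $G^{1/m}$ is locatable. Note that this already covers every odd $n$, since a graph on an odd number of vertices cannot have a perfect matching.

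If instead $\mmm(G) = n/2$, then $n$ must be even (so $n \geq 24$ and $r := n/2 \geq 12$) and every maximal matching of $G$ is perfect. Lemma~\ref{mmm} then restricts $G$ to be isomorphic to $K_n$ or $K_{r,r}$. The first case is handled by the main theorem of \cite{HJK}, which gives locatability of $K_n^{1/m}$ for $m \geq n/2$ whenever $n \geq 11$.

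The main obstacle is the remaining subcase $G \cong K_{r,r}$. Here every maximal matching already has size $r = n/2$, so Theorem~\ref{match} only yields locatability for $m \geq r+1$, which is one larger than the desired $m \geq r$. A separate argument is needed to show that $K_{r,r}^{1/m}$ is locatable for $m \geq r \geq 12$. I would adapt the proof of Theorem~\ref{match} using a perfect matching of $K_{r,r}$: the overall reduction scheme driven by Lemmas~\ref{astar}--\ref{delta} still applies, but the bipartite structure should let us recover the single probe's worth of slack that forces the ``$+1$'' in the general bound. Intuitively, as soon as the robber is known to be on some thread the two endpoints must lie in opposite parts of the bipartition, which removes exactly the ambiguity that costs the extra step in Lemma~\ref{noturnback}. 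Verifying that this saving can be realised uniformly across every branch of the case analysis---and that the modified procedure still fits within the time budget needed to ensure only boundedly many reductions---is the step I expect to require the most care.
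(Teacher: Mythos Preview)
Your case split via $\mmm(G)$, the application of Theorem~\ref{match} when $\mmm(G)<n/2$, and the appeal to Lemma~\ref{mmm} when $\mmm(G)=n/2$ all match the paper's proof exactly. The only divergence is in the $K_{r,r}$ subcase: the paper does not adapt the argument of Theorem~\ref{match} but simply cites Theorem~5 of \cite{HJK}, which already establishes that $K_{r,r}^{1/m}$ is locatable whenever $m\geqslant r$ (for $r\geqslant 3$). So the ``main obstacle'' you identify is not actually an obstacle---the bipartite result you need is available off the shelf in the same reference you invoke for $K_n$.

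Your proposed alternative (exploiting the bipartite structure to recover one probe of slack in the machinery of Lemma~\ref{noturnback}) may well be feasible, but as written it is only a heuristic sketch; you yourself flag that checking the saving propagates through every branch of the case analysis is the part you have not carried out. Since the paper disposes of this case in one sentence by citation, there is no need to do that work.
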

\begin{proof}If $m\geqslant n/2$ then also $m\geqslant 12$, since $n\geqslant 23$. If $G=K_n$ then $G^{1/m}$ is locatable by Theorem 4 of \cite{HJK}. If $G=K_{n/2,n/2}$ then $G^{1/m}$ is locatable by Theorem 5 of \cite{HJK}. Otherwise, by Lemma~\ref{mmm}, $G$ has a maximal matching $M$ of size $k$, where $k<n/2\leqslant m$. Since $k$ and $m$ are integers, $m\geqslant k+1$ and so, by Theorem~\ref{match}, $G^{1/m}$ is locatable.
\end{proof}

\section{Unequal subdivisions}

So far all our results, like those of \cite{CCDEW}, have assumed equal-length subdivisions. Carraher et al.\ conjectured that this restriction was not necessary, and that further subdividing a locatable graph always gives another locatable graph, but the present authors \cite{HJK} and Seager \cite{Sea14} independently gave a counterexample to this conjecture. Here we present a proof that a subdivision where the edges of $G$ are replaced by paths of arbitrary length is locatable, provided that the minimum length of path used is sufficiently large.

For a given graph $G$ and function $l:E(G)\to\mathbb{Z}^+$, define $G^{1/l}$ to be the graph obtained from $G$ by replacing each edge $e\in E(G)$ by a path of length $l(e)$. 
\begin{thm}For any graph $G$ with $n$ vertices, if $l(e)\geqslant 2n$ for every $e\in E(G)$ then $G^{1/l}$ is locatable.
\end{thm}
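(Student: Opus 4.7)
The plan is to mimic the strategy of Theorem~\ref{match}: maintain a set of branch vertices where the robber might next appear, and between his successive visits to branch vertices, either catch him outright or reduce this set. In the equal-length setting, the modular structure (distance mod $m$) made it trivial to detect when the robber is at a branch vertex; here we must replace that trick with thread-by-thread probes, compensated by the generous bound $l(e)\geqslant 2n$.

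The first ingredient I would establish is a thread-detection primitive. For a thread $e=u\cdots v$ with $l(e)\geqslant 2n$, probing a vertex near the midpoint of $e$ cleanly distinguishes the robber being on $e$ (response at most $l(e)/2$) from being off it: to reach the midpoint of $e$ from outside $e$ the robber would need to traverse at least half of some other thread plus part of $e$, giving response at least $l(e)/2+n$, which exceeds $l(e)/2$. Moreover, once the robber is known to be on a specific thread $e$, probing $u$ (or $v$) returns his exact distance along $e$, pinning him down in $O(1)$ further probes.

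Next I would prove analogues of Lemmas~\ref{astar}--\ref{delta}: given partial knowledge of the robber's position (for instance, he is on a thread from a fixed endpoint heading into some set $Z$, or he is adjacent to a specified branch vertex $x$), a bounded number of probes either catches him or reduces to a simpler configuration. The arguments follow the same case-analysis skeleton as the earlier lemmas, but use the midpoint primitive above in place of mod-$m$ arguments, and exploit that every inter-branch-vertex distance in $G^{1/l}$ is at least $2n$.

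Combining these, I would iterate the reduction just as in the proof of Theorem~\ref{match}, peeling off one candidate branch vertex at a time (or reducing to a smaller subcase) until only one candidate remains and the robber is caught. The main obstacle will be bookkeeping: ensuring that the number of probes spent in a single ``round'' (between consecutive branch-vertex visits) is bounded, and that this bound is strictly less than $2n$, so that the robber cannot traverse an entire thread and escape to a new branch vertex before the round ends. Since at most $O(n)$ probes per round suffice (one near-midpoint probe per thread incident to the current candidate branch vertex, plus $O(1)$ disambiguation probes), the slack afforded by $l(e)\geqslant 2n$ is more than enough to make the strategy go through.
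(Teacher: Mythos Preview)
Your plan diverges substantially from the paper's, and in a direction that creates unnecessary work. The paper does \emph{not} iterate in the style of Theorem~\ref{match}: it simply probes all $n$ branch vertices once, takes the pair $(u,v)$ with lexicographically smallest returned distances, and argues directly that the robber must currently be on a thread incident to $u$ (and that his nearer endpoint is $u$ or $v$). One further probe at $v$ either catches him on $u\thrd v$ or rules that thread out; a probe at $u$ then gives his exact distance $d'$ along whichever $u$-thread he occupies, and probing the vertex at depth $d'+n-1$ on each candidate thread from $u$ in turn identifies the correct one. The whole strategy is a single pass of roughly $2n$ probes with no reduction loop, no matching structure, and no analogue of Lemmas~\ref{astar}--\ref{delta}.

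Your sketch also contains a concrete error: the midpoint primitive does not give a gap of $n$. If the robber sits one step outside $e$ on an adjacent thread, his distance to the midpoint of $e$ is about $l(e)/2+1$, not $l(e)/2+n$; he need not traverse ``half of some other thread'' to reach that midpoint. The primitive still separates on-$e$ from off-$e$, but only by a margin of one, which is too fragile to survive the robber's movement over the many probes your iterative scheme would require. More broadly, Lemmas~\ref{noturnback}--\ref{delta} lean hard on the mod-$m$ structure (to detect branch-vertex visits from \emph{any} probe) and on the fixed matching; neither is available here, and you have not said how you would detect that the robber has reached a branch vertex in order to trigger a reduction round. The paper sidesteps all of this by never needing to detect branch-vertex visits at all.
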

\begin{proof}We give a winning strategy on $G^{1/l}$. First probe each branch vertex in turn, and write $d_x$ for the distance obtained from probing $x$. Let $(u,v)$ be a pair for which $(d_u,d_v)$ is as small as possible (lexicographically). Now consider the robber's possible positions at the time of the next probe. 

Write $x$ for the nearest branch vertex and $d$ for the robber's distance from that branch vertex. If the robber is not on a thread containing $u$ then $d_u\geqslant 2n+d-n$ and $d_x\leqslant d+n$. We cannot have equality in both cases, since the first would require $u$ to have been the first vertex probed, and the second would require $x$ to have been the first vertex probed. Thus $d_u>d_x$, contradicting the definition of $u$. So the robber must be on some thread containing $u$; let the other end of the thread be $w$. The shortest path from $u$ to the robber's current location is along $u\thrd w$, for if not $d_u\geqslant 2n+d-n$ and $d_w\leqslant d+n$ (and again equality cannot occur in both cases simultaneously).

Suppose $x\neq u$, i.e. the robber is nearer to $w$ than to $u$. Then $d_w\geqslant d+n$, but if $v\neq w$ then $d_v\leq 2n+d-n$. Again, we can have equality in at most one of these, so $v\neq w$ would imply $d_v>d_w$, contradicting the definition of $v$. Consequently either $x=u$ or $x=v$.

Now probe $v$. If the distance returned is at most $n$, the robber must be on $u\thrd v$ and we have won. Otherwise, probe $u$, and write $d'$ for the distance returned. If $d'=0$ we have won, so assume $d'>0$. If $w=v$ then we know from the previous probe that the robber's distance from $w$ (at the time of probing $u$) is at least $n-1$, whereas if $w\neq v$ then we know the robber's distance from $w$ (at the time of probing $u$) is at least $l(uw)/2-1$, which is again at least $n-1$ (since otherwise the robber was nearer to $w$ than $u$ on the previous turn, so $w=v$). For every branch vertex $y$ with $uy\in E(G)$ and $d'+n-1\leqslant l(uy)$, select the $(d'+n-1)$th vertex on $u\thrd y$ (counting $u$ as the $0$th). Note that every possible candidate for $w$ is covered, and there are at most $n-1$ vertices selected. Probe these vertices in turn until the distance returned is at most $\min(2(n-1),d'+n-1)$. This happens if and only if the robber is on the thread being probed (including the case where the robber has returned to $u$), so it must eventually occur. When it does we have identified the robber's location uniquely.
\end{proof}

\section{Acknowledgements}
The first author acknowledges support from the European Union through funding under FP7--ICT--2011--8 project HIERATIC (316705), and is grateful to Douglas B. West for drawing his attention to this problem. The second author acknowledges support through funding from NSF grant DMS~1301614 and MULTIPLEX grant no.\ 317532, and is grateful to the organisers of the 8th Graduate Student Combinatorics Conference at the University of Illinois at Urbana-Champaign for drawing his attention to the problem. The third author acknowledges support through funding from the European Union under grant EP/J500380/1 as well as from the Studienstiftung des Deutschen Volkes.

\end{document}